\newtheorem{lemma}{Lemma}
\newtheorem{theorem}{Theorem}
\DeclareMathOperator{\pfaffian}{pf}
\begin{document}
\title{Odd Properly Colored Cycles in Edge-Colored Graphs}

\date{}

\author[1]{Gregory Gutin}\author[1]{Bin Sheng}\author[1]{Magnus Wahlstr{\"o}m}

\affil[1]{Royal Holloway, University of London, TW20 0EX, Egham, Surrey, UK}

\maketitle

\pagestyle{plain}

\begin{abstract}

It is well-known that an undirected graph has no odd cycle if and only if it is bipartite. A less obvious, but similar result holds for directed graphs: a strongly connected digraph has no odd cycle if and only if it is bipartite. Can this result be further generalized to more general graphs such as edge-colored graphs?
In this paper, we study this problem and show how to decide if there exists an odd properly colored cycle in a given edge-colored graph. 
As a by-product, we show how to detect if there is a perfect matching in a graph with even (or odd) number of edges in a given edge set.

\end{abstract}

\section{Introduction}
A graph $G$ is {\em edge-colored} if each edge of $G$ is assigned a color. (Edge-colorings can be arbitrary, not necessarily proper.) A cycle $C$ in an edge-colored graph is {\em properly colored (PC)} if no pair of adjacent edges of $C$ have the same color. It is not hard to see that PC cycles in edge-colored graphs generalize directed cycles (dicycles) in digraphs: in a digraph $D$ replace every arc $uv$ by an undirected path $ux_{uv}v$, where $x_{uv}$ is a new vertex, and edges $ux_{uv}$ and  $x_{uv}v$ are of colors 1 and 2, respectively. Similarly, PC cycles generalize cycles in undirected graphs, e.g., by assigning every edge a distinct color. 

One of the central topics in graph theory is the existence of certain kinds of cycles in graphs. In digraphs, it is not hard to decide the existence of  any dicycle by simply checking whether a given digraph is acyclic~\cite{GG}. The problem of existence of PC cycles in edge-colored graphs is less trivial. To solve the problem, we may use Yeo's theorem~\cite{AY}: if an edge-colored graph $G$ has no PC cycle then $G$ contains a vertex $z$ such that no connected component of $G-z$ is joined to $z$ with edges of more than one color. Thus, we can recursively find such vertices $z$ and delete them from $G$; if we end up with a trivial graph (containing just one vertex) then $G$ has no PC cycles; otherwise $G$ has a PC cycle. Clearly, the recursive algorithm runs in polynomial time. 

One of the next natural questions is to decide whether a digraph has an odd (even, respectively) dicycle, i.e. a dicycle of odd (even) length, respectively. For odd dicycles we can employ the following well-known result (see, e.g.,~\cite{GG,HNC}): A strongly connected digraph is bipartite if and only if it has no odd dicycle. (Note that the result does not hold for non-strongly connected digraphs.) Thus, to decide whether a digraph $D$ has an odd cycle, we can find strongly connected components of $D$ and  check whether all components are bipartite. This leads to a simple polynomial-time algorithm. The question of whether we can decide in polynomial time whether a digraph has an even dicycle, is much harder and was an open problem for quite some time till it was solved, in affirmative, independently by McCuaig, and Robertson, Seymour and Thomas (see~\cite{RST}) who found highly non-trivial proofs. 

In this paper we consider the problem of deciding the existence of an odd PC cycle (and of finding one, if it exists) in an edge-colored graph in polynomial time. We show that while a natural extension of the odd dicycle solution does not work, an algebraic approach using Tutte matrices and the Schwartz-Zippel lemma allows us to prove that there is a randomized polynomial-time algorithm for solving the problem. The existence of a deterministic polynomial-time algorithm for the odd PC cycle problem remains an open question, as does the existence of a polynomial-time algorithm for the even PC cycle problem. 

In this paper, we allow our graphs to have multiple edges (but no loops) and call them, for clarity, {\em multigraphs}. In edge-colored multigraphs, we allow parallel edges of different colors (there is no need to consider parallel edges of the same color). For an edge $xy$ and a vertex $v$, we use $\chi(xy)$ and $\chi(v)$ to denote the color of $xy$ and the set of colors of edges incident to $v$, respectively.
For any other terminology and notation not provided here, we refer the readers to~\cite{GG}. There is an extensive literature on PC paths and cycles: for a detailed survey
of pre-2009 publications, see Chapter 16 of~\cite{GG}; more recent papers include~\cite{AD+,FM,Lo1,Lo2,Lo3}.

\section{Graph-Theoretical Approaches}

Recall that to solve the odd dicycle problem, in the previous section, we used the following result: A strongly connected digraph is bipartite if and only if it has no odd dicycle. It is not straightforward to generalize strong connectivity to edge-colored multigraphs. Indeed, color-connectivity\footnote{We will not define color-connectivity; an interested reader can find its definition in Sec. 16.6 of~\cite{GG}.}, introduced by Saad~\cite{Sa} under another name, does not appear to be useful to us as, in general, it does not partition vertices into components. Thus, we will use cyclic connectivity introduced by Bang-Jensen and Gutin~\cite{BJG} as follows. Let $P=\{H_1, \dots, H_p\}$ be a set of subgraphs of an edge colored multigraph $G$. The {\em intersection graph} $\Omega(P)$ of $P$ has the vertex set $P$ and the edge set $\{H_iH_j: V(H_i)\cap V(H_j)\neq \emptyset, 1\leq i < j\leq p\}$. A pair $x,y$ of vertices in an edge-colored multigraph $H$ is \textit{cyclic connected} if $H$ has a collection of PC cycles $P=\{C_1, \dots, C_p\}$ such that $x$ and $y$ belong to some cycles in $P$ and $\Omega(P)$ is a connected graph.  A maximum cyclic connected induced subgraph of $G$ is called a {\em cyclic connected component} of $G$. Note that cyclic connected components partition the vertices of $G$. Also note that cyclic connectivity for digraphs, where dicycles are considered instead of PC cycles, coincides with strong connectivity.  One could wonder whether every non-bipartite cyclic connected edge-colored graph has an odd PC cycle. Unfortunately, it is not true, see a graph $H$ in Fig. \ref{fig:counterExample}. It is not hard to check that $H$ is not bipartite and cyclic connected. It has even PC cycles, such as $v_1v_2v_5v_3v_1$, but no odd PC cycles.

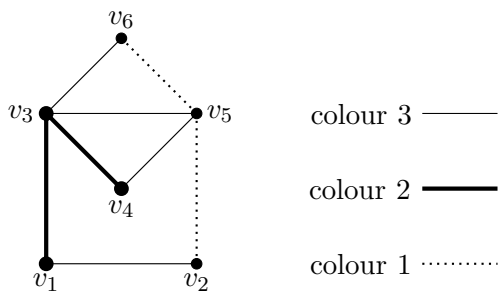
\begin{figure}\centering\label{figure1}
\begin{tikzpicture}
\draw [ultra thick](5,5)[fill]circle [radius=0.07]--(5,7)[fill]circle [radius=0.07]node [left]{$v_3$};
\draw (5,5) node [below]{$v_1$}--(7,5)[fill]circle [radius=0.07] node [below]{$v_2$};
\draw [ultra thick](5,7)--(6,6)[fill]circle [radius=0.07]node [below]{$v_4$};
\draw (6,6)--(7,7)[fill]circle [radius=0.07]node [right]{$v_5$};
\draw (5,7)--(6,8)[fill]circle [radius=0.07]node [above]{$v_6$};
\draw [dotted, thick](6,8)--(7,7);
\draw [dotted, thick](7,7)--(7,5);
\draw (5,7)--(7,7);

\draw [dotted, thick](10,5)node[left]{colour 1}  --(11,5) ;
\draw [ultra thick](10,6)node[left]{colour 2}  --(11,6) ;
\draw  (10,7) node[left]{colour 3}  --(11,7) ;
\end{tikzpicture}
\caption{Non-bipartite cyclic connected graph with no odd PC cycle}
  \label{fig:counterExample}
\end{figure}


Another natural idea is to find some odd PC closed walk first, and hope to find an odd PC cycle in it. 
Unfortunately,  we cannot generate all possible PC closed walks in polynomial time, and moreover a PC closed walk does not necessarily contain an odd PC cycle, see the graph in Figure \ref{fig:oddWalk}. It contains an odd PC walk, but not an odd PC cycle. 

\begin{figure}\centering\label{figure2}
\begin{tikzpicture}
\draw [ultra thick][fill] (0,0) circle [radius=0.07]node[left]{$v_1$}--(1,0)circle [radius=0.07]node[below]{$v_2$};
\draw (1,0)  --(2,1)[fill]circle [radius=0.07] node [below]{$v_5$};
\draw [ultra thick](2,1)  --(3,0) node [below]{$v_3$};
\draw (3,0)[fill]circle [radius=0.07]  --(4,0)[fill]circle [radius=0.07] node [below]{$v_4$};
\draw [dotted, thick](0,0)  --(0,1)[fill]circle [radius=0.07] node [left]{$v_0$};
\draw (0,1)  --(0,2)[fill]circle [radius=0.07] node [left]{$v_6$};
\draw [ultra thick](0,2)  --(1,2)[fill]circle [radius=0.07] node [above]{$v_7$};
\draw (1,2)  --(2,1) ;
\draw [ultra thick](2,1)  --(3,2)[fill]circle [radius=0.07] node [above]{$v_8$};
\draw (3,2)  --(4,2)[fill]circle [radius=0.07] node [right]{$v_9$};
\draw [ultra thick](4,0)  --(4,2) ;
\draw [dotted, thick](6,0)node[left]{colour 1}  --(7,0) ;
\draw [ultra thick](6,1)node[left]{colour 2}  --(7,1) ;
\draw  (6,2) node[left]{colour 3}  --(7,2) ;

\end{tikzpicture}

\caption{An odd PC closed walk}
  \label{fig:oddWalk}
\end{figure}
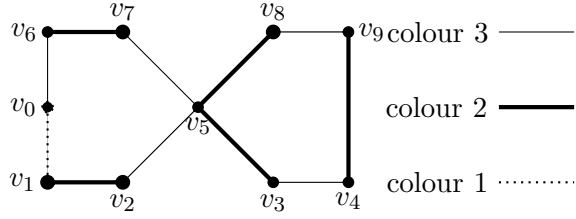

\section{Algebraic Approach}\label{sec3}


In an edge-colored multigraph $G$, a vertex $v$ is {\em monochromatic} if $|\chi(v)|=1$. 
Let  $G'$ be the multigraph obtained from $G$ by recursively deleting  monochromatic vertices such that $G'$ has no monochromatic vertex. Following Szeider~\cite{Sz}, let $G_x$, $x\in V(G')$ denote a graph with vertex set $$\{x_i,x'_i:\ i\in \chi(x)\}\cup \{x''_a,x''_b\} \mbox{ and edge set }$$
$$\{x''_ax''_b,x'_ix''_a,x'_ix''_b:\ i\in \chi(x)\}\cup \{x_ix'_i:\ i\in \chi(x)\}.$$
Let $G^*$ denote a graph with vertex set $\bigcup_{x\in V(G')}V(G_x)$ and edge set $E_1\cup E_2$, where $E_1=\bigcup_{x\in V(G')}E(G_x)$ and $E_2=\{y_qz_q:\ yz\in E(G'), \chi(yz)=q\}.$ Let $c=\max\{\chi(x):\ x\in V(G)\}$. Note that \begin{equation}\label{eq0} |V(G^*)| = O(c|V(G)|) \end{equation}
 
A subgraph of an edge-colored multigraph is called a \textit{PC cycle subgraph} if it consists of several vertex-disjoint PC cycles. We will use the following result of~\cite{GE}.

\begin{theorem}\label{thm2}
Let $G$ be a connected edge-colored multigraph such that $G'$
is non-empty and $G^*$ constructed as above. Then $G$ has a PC cycle subgraph with $r$ edges if and only if $G^{*}$ has a perfect matching with exactly $r$ edges in $E_2$. 
\end{theorem}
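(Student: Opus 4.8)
The plan is to prove both directions of the equivalence by analysing, vertex by vertex, how a perfect matching of $G^*$ must behave inside each gadget $G_x$, and then matching this local behaviour against the local structure of a PC cycle subgraph at $x$. The first point I would record is that no vertex lying on a PC cycle is ever deleted in passing from $G$ to $G'$: a monochromatic vertex cannot lie on a PC cycle, since its two cycle-edges would be adjacent and share a color, and an induction on the deletion order extends this. Indeed, if the first $k$ deleted vertices avoid all PC cycles and the $(k+1)$-st deleted vertex $v$ lay on a PC cycle $C$, then the two $C$-neighbours of $v$ are themselves on $C$ and so, by the inductive hypothesis, were not deleted before $v$; hence both differently-coloured cycle-edges at $v$ are still present at the moment $v$ is deleted, contradicting that $v$ is monochromatic there. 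Consequently PC cycle subgraphs of $G$ coincide with those of $G'$, and it suffices to relate PC cycle subgraphs of $G'$ to perfect matchings of $G^*$.

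The core of the argument is a local lemma about the gadget $G_x$. Since each port $x_i$ (for $i\in\chi(x)$) is the unique vertex of $G_x$ carrying color $i$, at most one $E_2$-edge of each color can be incident to $G_x$ in a matching; in particular any two $E_2$-edges touching $G_x$ automatically use distinct colors, which is exactly what properness at $x$ demands. I would then show that the number $t$ of $E_2$-edges incident to $G_x$ in a perfect matching $M$ is either $0$ or $2$. The key is that $x_i$ is adjacent only to $x'_i$ inside $G_x$, so if $x_i$ is matched by an $E_2$-edge then $x'_i$ must be matched inside the core $\{x'_i:\ i\in\chi(x)\}\cup\{x''_a,x''_b\}$; but the core has no edge between two $x'$-vertices, so each freed $x'_i$ must be paired with $x''_a$ or $x''_b$. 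Since there are only two such targets, $t\le 2$, and a parity count rules out $t=1$ (it leaves an odd core to match), giving $t\in\{0,2\}$. Conversely, any prescribed choice of $0$ or $2$ incident $E_2$-edges extends to a perfect matching of the gadget interior: with $t=0$ take $x''_ax''_b$ together with all $x_ix'_i$, and with $t=2$ (ports $x_i,x_j$ used) take $x_kx'_k$ for $k\ne i,j$ together with $x'_ix''_a$ and $x'_jx''_b$.

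Granting this lemma, both directions follow by a clean bijection. Given a PC cycle subgraph $F$ of $G'$ with $r$ edges, each vertex of $F$ meets exactly two $F$-edges of distinct colors, so placing the $E_2$-edge $y_qz_q$ into $M$ for every color-$q$ edge $yz$ of $F$ and completing each gadget interior by the lemma yields a perfect matching with exactly $r$ edges in $E_2$. Conversely, from a perfect matching $M$ with $r$ edges in $E_2$, declare the color-$q$ edge $yz$ to lie in $F$ whenever $y_qz_q\in M$; the lemma guarantees that every vertex of $F$ has degree $0$ or $2$ with its two incident edges differently coloured, so $F$ is a vertex-disjoint union of PC cycles with exactly $r$ edges. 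The one point needing care is the bijection between original edges and $E_2$-edges: since parallel edges of the same color are disallowed, each edge $yz$ of color $q$ corresponds to the unique $E_2$-edge $y_qz_q$, so edge counts are preserved exactly. The main obstacle, and where I would spend the most effort, is the gadget lemma itself — in particular the combined reachability bound $t\le 2$ and the parity argument ruling out $t=1$ — since the rest is bookkeeping built on top of it.
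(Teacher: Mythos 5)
The paper does not prove Theorem~\ref{thm2}: it is quoted from~\cite{GE} (the gadget being Szeider's~\cite{Sz}), so there is no in-paper proof to compare against. Your argument is correct and is essentially the standard proof of this reduction: the local analysis of a perfect matching inside each gadget $G_x$ (exactly $0$ or $2$ incident $E_2$-edges, automatically of distinct colors since each port $x_i$ carries a unique color, with both local configurations extendable to a perfect matching of the gadget interior), combined with the observation that recursively deleted monochromatic vertices lie on no PC cycle so that PC cycle subgraphs of $G$ and $G'$ coincide, yields both directions with the edge count preserved. I see no gaps.
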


Using Theorem \ref{thm2}, the problem of deciding if there exists an odd PC cycle in $G$ reduces to that of deciding if there is a perfect matching with an odd number of edges from $E_2$ in the graph $G^{*}$
(indeed,~$G^*$ has an odd PC cycle subgraph if and only if it has an odd PC cycle).  

We use the properties of Tutte matrices to solve the reduced problem. 
For a graph $G=(V,E)$ with $V=\{v_1,v_2,..., v_n\}$, the {\em Tutte matrix} $A_{G}$ is the $n \times n$ multivariate polynomial matrix with entries
\begin{equation}\label{eq1}
A_{G}(i,j) =
\left\{
\begin{array}{rl}
x_{ij} & \text{if } v_{i}v_{j}\in E ~~\text{and}~~i<j \\
-x_{ji} & \text{if } v_{i}v_{j}\in E ~~\text{and}~~i>j \\
0 & \text{otherwise,} 
\end{array}
\right.
\end{equation}
where $x_{ij}$ are distinct variables. Tutte~\cite{tt} proved that a graph $G$ has a perfect matching if and only if $\det A_{G}$ is not identically 0.

We say that a matrix $A$ is \textit{skew symmetric} if $A + A^{T}=0$. Note that the Tutte matrix is skew symmetric. In our argument, we will use the notion of Pfaffian of a skew symmetric matrix. 
Let $A = [a_{ij}]$ be a $2n \times 2n$ skew symmetric matrix. The {\em Pfaffian} of $A$ is defined as follows.
\begin{equation} \label{eq:pf}
\pfaffian A = \sum_{\sigma} sgn(\sigma) \prod_{i=1}^n a_{\sigma(2i-1),\sigma(2i)},
\end{equation}
where $sgn(\sigma)$ is the signature of $\sigma$ and the summation is over all permutations~$\sigma$ such that~$\sigma(2i-1)<\sigma(2i)$ for each~$1 \leq i \leq n$,
and~$\sigma(2i)<\sigma(2i+2)$ for each~$1 \leq i < n$ (i.e., each partition of the set~$\{1,\ldots,2n\}$ into pairs is included in the sum exactly once). Note in particular that for a graph~$G$, this formula for~$\pfaffian A_G$ enumerates every perfect matching of~$G$ exactly once.

Observe that, if we regard $a_{ij}$ as indeterminate, $\pfaffian A$ is a multi-linear polynomial. For an odd skew symmetric matrix $A$, the Pfaffian is defined to be zero. We will use the following well-known relation between Pfaffian and determinants of skew symmetric matrices (see, e.g.,~\cite{LM}).

\begin{theorem}\label{thm3}
If $A$ is a skew symmetric matrix, then $\det A = (\pfaffian A)^2$.
\end{theorem}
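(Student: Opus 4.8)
The plan is to regard the above-diagonal entries $a_{ij}$ (for $i<j$) as independent indeterminates over $\mathbb{Q}$, so that both $\det A$ and $(\pfaffian A)^2$ become elements of the polynomial ring $\mathbb{Q}[a_{ij}]$; establishing their equality there yields the identity for every skew symmetric matrix over any commutative ring by specialization. The case of odd order is disposed of immediately: if $A$ is $m\times m$ with $m$ odd then $\det A=\det A^{T}=\det(-A)=(-1)^m\det A=-\det A$ forces $\det A=0$, while $\pfaffian A=0$ by definition, so the identity holds. Hence I may assume $A$ is $2n\times 2n$, and I pass to the fraction field $K=\mathbb{Q}(a_{ij})$, over which the generic $A$ is nonsingular since $\det A$ is a nonzero polynomial.

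The heart of the argument is the \emph{transformation law} $\pfaffian(P^{T}AP)=\det(P)\,\pfaffian(A)$, valid for every $2n\times 2n$ matrix $P$. I would prove it through the exterior-algebra description of the Pfaffian. Associate with $A$ the bivector $\omega_A=\sum_{i<j}a_{ij}\,e_i\wedge e_j\in\bigwedge^{2}K^{2n}$, and verify, by expanding the $n$-fold wedge power and matching monomials against definition~(\ref{eq:pf}), that
\[
\omega_A^{\wedge n}=n!\,(\pfaffian A)\,e_1\wedge\cdots\wedge e_{2n}.
\]
The induced map $\bigwedge^{2}P$ on bivectors sends $e_i\wedge e_j$ to $(Pe_i)\wedge(Pe_j)$, and a short computation using skew symmetry shows it carries $\omega_A$ to $\omega_{PAP^{T}}$; moreover $\bigwedge^{2n}P$ scales the top form $e_1\wedge\cdots\wedge e_{2n}$ by $\det P$. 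Since $\bigwedge^{\bullet}P$ is an algebra homomorphism, comparing the $n$-th wedge powers gives $\pfaffian(PAP^{T})=\det(P)\,\pfaffian(A)$, and replacing $P$ by $P^{T}$ (using $\det P=\det P^{T}$) yields the stated form.

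With the transformation law in hand I would invoke the congruence normal form for alternating forms over a field: there is an invertible $Q$ with $Q^{T}AQ=B$, where $B$ is block diagonal with $2\times 2$ blocks $J=\left(\begin{smallmatrix}0&1\\-1&0\end{smallmatrix}\right)$ (and, in the degenerate case, a trailing zero block). On $B$ the identity is read off directly: in the nonsingular case $B=\operatorname{diag}(J,\dots,J)$ has $\det B=1$, while only the pair-partition $\{1,2\},\dots,\{2n-1,2n\}$ survives in~(\ref{eq:pf}), so $\pfaffian B=1$; a zero block makes both $\det B$ and $\pfaffian B$ vanish. In either case $\det B=(\pfaffian B)^2$. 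Writing $P=Q^{-1}$ so that $A=P^{T}BP$, multiplicativity of the determinant gives $\det A=(\det P)^2\det B$, whereas the transformation law gives $\pfaffian A=(\det P)\,\pfaffian B$; squaring the latter and substituting $\det B=(\pfaffian B)^2$ recovers $\det A=(\pfaffian A)^2$.

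The reductions (odd case, specialization, normal form) are routine; the one step requiring genuine care is the transformation law, and within it the identity $\omega_A^{\wedge n}=n!\,(\pfaffian A)\,e_1\wedge\cdots\wedge e_{2n}$, where the pair-partition restriction and signature bookkeeping of~(\ref{eq:pf}) must be reconciled with the sign rule of the wedge product. A purely combinatorial alternative would be to prove the transformation law by checking it on elementary matrices (scalings, transvections, transpositions) and extending by multiplicativity, but the sign accounting there is comparably delicate, so I would expect the wedge-power verification to be the main obstacle either way.
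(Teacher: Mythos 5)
Your proof is correct, but note that the paper does not actually prove Theorem~\ref{thm3} at all: it is quoted as a well-known fact with a pointer to Lov\'asz--Plummer \cite{LM}, so any argument you give is necessarily ``different from the paper's.'' What you supply is the standard modern proof: reduce to the generic skew symmetric matrix over $\mathbb{Q}(a_{ij})$, establish the transformation law $\pfaffian(P^{T}AP)=\det(P)\pfaffian(A)$ via the identification $\omega_A^{\wedge n}=n!\,(\pfaffian A)\,e_1\wedge\cdots\wedge e_{2n}$, and finish with the congruence normal form $\mathrm{diag}(J,\dots,J)$. All the steps check out; in particular your computation that $\bigwedge^{2}P$ carries $\omega_A$ to $\omega_{PAP^{T}}$ is the right way to package the sign bookkeeping, and the odd-order and degenerate cases are handled properly. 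The classical alternative, which is closer in spirit to how matching-theory texts present it, avoids exterior algebra entirely: expand $\det A=\sum_{\pi}\mathrm{sgn}(\pi)\prod_i a_{i\pi(i)}$, observe that permutations with a fixed point or an odd cycle contribute zero (the odd-cycle terms cancel in pairs by reversing one odd cycle, using skew symmetry), and biject the surviving permutations --- those whose cycles are all even --- with ordered pairs of pair-partitions, i.e.\ with pairs of terms of $\pfaffian A$; a careful sign count then gives $\det A=(\pfaffian A)^2$ directly. That route is more elementary (no normal form, no base change) and meshes naturally with the paper's matching-enumeration viewpoint on formula~(\ref{eq:pf}), but its sign accounting is at least as delicate as the wedge-power identity you flag as the main obstacle; your version trades that combinatorics for two clean pieces of multilinear algebra, which is a reasonable exchange. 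The only pedantic caveat is that your specialization step implicitly treats ``skew symmetric'' as including a zero diagonal (automatic outside characteristic $2$), which is harmless for every use made of the theorem in this paper.
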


Given a graph $G=(V,E)$ and a subset of edges $E_0\subseteq E(G)$, we now define another skew symmetric matrix $A_{G,E_0}$ whose entries are
\begin{equation}\label{eq2}
A_{G, E_0}(i,j) =
\left\{
\begin{array}{rl}
-A_{G}(i,j) & \text{if } v_{i}v_{j}\in  E_0 \\
A_{G}(i,j) & \text{if } v_{i}v_{j}\notin  E_0 \\ 
\end{array}
\right.
\end{equation}
It is easy to see that $A_{G, E_0}$ is also a skew symmetric matrix, thus by Theorem \ref{thm3}, $\det A_{G,E_0}= (\pfaffian A_{G,E_0})^2$. Note that $A_G$ and $A_{G,E_0}$ only differ at entries corresponding to edges in $E_0$. 
We call a perfect matching $M$ in a graph $G$ {\em $E_0$-odd} ({\em $E_0$-even}, respectively) if $|M\cap E_0|$ is odd (even, respectively). Here is our key result.

\begin{lemma}\label{lemma2}
Given a graph $G$ with even number of vertices and an edge subset $E_0\subseteq E(G)$, let $A_{G}$ and $A_{G,E_0}$ be defined as in (\ref{eq1}) and (\ref{eq2}). Then $\det A_{G,E_0}=\det A_{G}$ if and only if all the perfect matchings of $G$ are of same $E_0$-parity.
\end{lemma}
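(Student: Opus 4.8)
The plan is to route everything through the Pfaffian rather than working with the determinant directly, using Theorem~\ref{thm3}. First I would read off $\pfaffian A_G$ from the definition~(\ref{eq:pf}). Since the summation only uses index pairs with $\sigma(2i-1)<\sigma(2i)$, every entry that appears is of the form $A_G(i,j)$ with $i<j$, which by~(\ref{eq1}) equals $x_{ij}$. Combined with the remark that~(\ref{eq:pf}) enumerates each perfect matching exactly once, this lets me write $\pfaffian A_G = \sum_M s_M X_M$, where the sum is over the perfect matchings $M$ of $G$, the monomial of $M$ is $X_M = \prod_{e \in M} x_e$, and $s_M = sgn(\sigma_M) \in \{+1,-1\}$. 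The decisive structural feature is that distinct matchings yield distinct monomials, because each edge carries its own variable; hence the $X_M$ are linearly independent polynomials.

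Next I would compare the two Pfaffians term by term. By~(\ref{eq2}), $A_{G,E_0}$ arises from $A_G$ by negating exactly the entries indexed by edges of $E_0$, so the term attached to a matching $M$ acquires a factor $(-1)^{|M \cap E_0|}$, giving $\pfaffian A_{G,E_0} = \sum_M (-1)^{|M \cap E_0|} s_M X_M$. By Theorem~\ref{thm3}, the equality $\det A_{G,E_0} = \det A_G$ is equivalent to $(\pfaffian A_{G,E_0})^2 = (\pfaffian A_G)^2$. Working in the multivariate polynomial ring, which is an integral domain, $p^2 = q^2$ factors as $(p-q)(p+q)=0$ and therefore forces $p=q$ or $p=-q$; so $\det A_{G,E_0} = \det A_G$ holds if and only if $\pfaffian A_{G,E_0} = \pfaffian A_G$ or $\pfaffian A_{G,E_0} = -\pfaffian A_G$.

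Finally I would invoke linear independence of the $X_M$ to convert each of these two polynomial identities into a parity condition. The identity $\pfaffian A_{G,E_0} = \pfaffian A_G$ holds exactly when $(-1)^{|M \cap E_0|} = 1$ for every matching $M$, i.e.\ when every perfect matching is $E_0$-even; the identity $\pfaffian A_{G,E_0} = -\pfaffian A_G$ holds exactly when $(-1)^{|M \cap E_0|} = -1$ for every $M$, i.e.\ when every perfect matching is $E_0$-odd. Together these two cases say precisely that all perfect matchings share a common $E_0$-parity, which is the asserted equivalence. The degenerate case in which $G$ has no perfect matching is trivial, since then both Pfaffians, and hence both determinants, are identically $0$, while the parity condition is vacuously satisfied.

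The step I expect to be the real crux is the passage $p^2 = q^2 \Rightarrow p = \pm q$, and in particular recognizing that the sign choice here is \emph{global} rather than selectable per matching. A naive attempt to compare the two determinants directly runs into trouble: as squares, their coefficients are sums of products of pairs of matching monomials, and the parity structure gets buried in that bookkeeping. The argument only becomes clean by passing to the multilinear Pfaffian, where each matching appears once with coefficient $\pm 1$; and the reason the conclusion is ``all matchings of one parity'' rather than ``each matching separately adjustable'' is exactly that the integral-domain identity $p=\pm q$ cannot be met by flipping signs independently on different matchings.
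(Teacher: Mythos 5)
Your proposal is correct and follows essentially the same route as the paper: reduce $\det = \pfaffian^2$ via Theorem~\ref{thm3}, observe that the two Pfaffians enumerate the same perfect matchings with a sign flip of $(-1)^{|M\cap E_0|}$, and conclude that equality of determinants forces a single global sign. You merely make explicit two points the paper leaves implicit --- the integral-domain step $p^2=q^2\Rightarrow p=\pm q$ and the linear independence of the matching monomials --- which is a sound elaboration rather than a different argument.
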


\begin{proof}
As both $A_{G}$ and $A_{G,E_0}$ are skew symmetric, by Theorem \ref{thm3}, 
$$\det A_G = (\pfaffian A_G)^2 \mbox{ and } \det A_{G,E_0}
=(\pfaffian A_{G,E_0})^2.$$ 
Thus $\det A_{G,E_0}=\det A_{G}$ if and only if $\pfaffian A_{G,E_0}=\pfaffian A_{G}$ or $\pfaffian A_{G,E_0}=-\pfaffian A_{G}$. 
By~(\ref{eq:pf}),~$\pfaffian A_G$ and~$\pfaffian A_{G,E_0}$ both enumerate perfect matchings of~$G$, 
and by the definitions of~$A_G$ and~$A_{G,E_0}$, for each such matching~$M$ its contributions to~$\pfaffian A_G$ and~$\pfaffian A_{G,E_0}$
differ (by a sign term) if and only if~$M$ is~$E_0$-odd. 
Hence 
$\pfaffian A_{G,E_0}=\pfaffian A_{G}$ ($\pfaffian A_{G,E_0}=-\pfaffian A_{G}$, respectively) if and only if each perfect matching in $G$ is $E_0$-even ($E_0$-odd, respectively). 
\end{proof}

For $G=G^*$ and $E_0=E_2\subseteq E(G^*)$, by Lemma \ref{lemma2}, if $\det {A^{*}}_{G,E_2}\neq \det A^{*}_{G}$, then $G^*$ has a $E_2$-odd perfect matching and a $E_2$-even perfect matching.
If $\det {A^{*}}_{G,E_2}=\det A^{*}_{G}$, then every perfect matching of $G^*$ is either $E_0$-even or $E_0$-odd. In such a case, we can 
find a perfect matching $M$ of the graph $G^*$ in polynomial time, and decide the parity of $M\cap E_2$. So we have an algorithm for deciding if $G^*$  has a perfect matching with even or odd number of edges in $E_2$. Unfortunately, we do not know whether this algorithm is polynomial or not as there is no polynomial algorithm to decide whether a multivariate polynomial is identically zero. Fortunately, we can have a polynomial randomized algorithm due to the following well-known lemma, called the Schwartz-Zippel lemma.\footnote{It was independently discovered by several authors: Schwartz~\cite{S}, Zippel~\cite{Z}, DeMillo and Lipton~\cite{DL}.}

\begin{lemma}\label{thm6}
Let $P(x_{1},x_{2},\dots,x_{n})$ be a multivariate polynomial of total degree at most $d$ over a field $F$, and assume that $P$ is not identically zero. Pick $r_{1},r_{2},\dots,r_{n}$ uniformly at random from a finite set $S$ of values where $S\subset F$. Then the probability
$\mathbb{P}(P(r_{1},r_{2},\dots,r_{n})=0)\leq \frac{d}{|S|}$.
\end{lemma}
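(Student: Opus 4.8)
The plan is to prove the bound by induction on the number of variables $n$, with $d$ and the finite set $S$ held fixed throughout.

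For the base case $n=1$, the polynomial $P$ is a univariate polynomial of degree at most $d$ that is not identically zero, so it has at most $d$ roots in $F$. Since $r_1$ is drawn uniformly from $S$, at most $d$ of the $|S|$ possible choices can make $P$ vanish, which gives $\mathbb{P}(P(r_1)=0)\le d/|S|$ immediately.

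For the inductive step I would single out the variable $x_1$ and rewrite $P$ as a polynomial in $x_1$ whose coefficients are polynomials in the remaining variables:
\[
P(x_1,\dots,x_n)=\sum_{i=0}^{k} x_1^{\,i}\,Q_i(x_2,\dots,x_n),
\]
where $k$ is the largest exponent of $x_1$ occurring with a nonzero coefficient. Then $Q_k$ is not identically zero and has total degree at most $d-k$, so the induction hypothesis (applied to the $n-1$ variables $x_2,\dots,x_n$) yields $\mathbb{P}(Q_k(r_2,\dots,r_n)=0)\le (d-k)/|S|$.

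The heart of the argument is a case split on whether this leading coefficient vanishes at the random point. Writing $B$ for the event $\{Q_k(r_2,\dots,r_n)=0\}$, on the complement $\bar B$ the univariate polynomial $P(x_1,r_2,\dots,r_n)$ has degree exactly $k$ and is not identically zero, so by the base-case reasoning it has at most $k$ roots; hence the conditional probability that $r_1$ is a root is at most $k/|S|$. Combining the two cases gives
\[
\mathbb{P}(P=0)\le \mathbb{P}(B)+\mathbb{P}(P=0\mid \bar B)\le \frac{d-k}{|S|}+\frac{k}{|S|}=\frac{d}{|S|},
\]
which closes the induction. The one step that needs care, and the place I expect to be the main obstacle, is the conditioning: I must use that the draws $r_1,\dots,r_n$ are independent, so that conditioning on the realised values of $r_2,\dots,r_n$ leaves $r_1$ uniformly distributed on $S$. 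Only then is the one-variable bound legitimately applicable to $P(x_1,r_2,\dots,r_n)$ inside the event $\bar B$.
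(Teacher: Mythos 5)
Your proof is correct. Note, however, that the paper itself offers no proof of this lemma: it is imported as a well-known result (the Schwartz--Zippel lemma) with citations to Schwartz, Zippel, and DeMillo--Lipton, so there is no in-paper argument to compare against. What you have written is the standard induction on the number of variables found in those original sources and in textbooks: peel off the highest power $k$ of $x_1$, apply the inductive hypothesis to the leading coefficient $Q_k$ (degree at most $d-k$), and use the univariate root bound conditionally on $Q_k(r_2,\dots,r_n)\neq 0$. Two small points of hygiene: the inequality $\mathbb{P}(P=0)\le \mathbb{P}(B)+\mathbb{P}(P=0\mid \bar B)$ is shorthand for $\mathbb{P}(P=0)\le \mathbb{P}(B)+\mathbb{P}(P=0\mid \bar B)\,\mathbb{P}(\bar B)\le \mathbb{P}(B)+\mathbb{P}(P=0\mid \bar B)$, which is valid but worth writing out; and you are right that the essential hypothesis is that $r_1,\dots,r_n$ are drawn independently, so that conditioning on any realisation of $(r_2,\dots,r_n)$ lying in $\bar B$ leaves $r_1$ uniform on $S$ --- the bound is then obtained pointwise over such realisations and averaged. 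The degenerate case $k=0$ causes no trouble, since then $P$ does not involve $x_1$ and the conditional probability on $\bar B$ is $0\le k/|S|$.
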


Now it is not hard to prove the following:

\begin{theorem}\label{thm7}
Let $G$ be an edge-colored multigraph with $n$ vertices and let  $c=\max\{\chi(v):\ v\in V(G)\}$. There is a randomized algorithm running in time $O((cn)^{\omega})$, where $\omega < 2.3729$, that decides if there is an odd PC cycle in $G$, with false negative probability less than 1/4.
\end{theorem}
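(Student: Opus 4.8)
The plan is to assemble the randomized algorithm from the pieces already developed in the excerpt and then account for the running time and error probability. First I would reduce the odd PC cycle problem on $G$ to the reduced matching problem: by deleting monochromatic vertices we obtain $G'$, and by Theorem~\ref{thm2} the graph $G$ has an odd PC cycle subgraph (equivalently, an odd PC cycle) if and only if $G^*$ has a perfect matching with an odd number of edges in $E_2$. Note that if $G'$ is empty then $G$ has no PC cycle at all, so we may assume $G^*$ is built and has an even number of vertices (otherwise no perfect matching exists and we answer ``no''). By~(\ref{eq0}) we have $|V(G^*)| = O(cn)$.

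Next I would invoke Lemma~\ref{lemma2} with the graph $G^*$ and edge set $E_0 = E_2$. The algorithm computes the two Tutte-type matrices $A_{G^*}$ and $A_{G^*,E_2}$, and the decision hinges on whether $\det A_{G^*,E_2} = \det A_{G^*}$ as polynomials. If these determinants differ, then not all perfect matchings of $G^*$ share the same $E_2$-parity, so in particular an $E_2$-odd perfect matching exists and we answer ``yes''; if they are equal, all perfect matchings have the same $E_2$-parity, and we can decide that common parity by computing a single perfect matching of $G^*$ and inspecting $|M \cap E_2|$. Thus the whole decision reduces to testing equality of two multivariate polynomial determinants, equivalently testing whether the single polynomial $\det A_{G^*,E_2} - \det A_{G^*}$ is identically zero.

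The main obstacle is that this polynomial identity test cannot be performed deterministically in polynomial time, which is exactly why the theorem claims only a randomized algorithm. Here I would apply the Schwartz-Zippel lemma (Lemma~\ref{thm6}). Both determinants are polynomials in the variables $x_{ij}$ of total degree at most $N := |V(G^*)| = O(cn)$, so their difference has total degree at most $N$. I would substitute values $r_{ij}$ chosen uniformly at random from a set $S \subseteq F$ with $|S| \geq 4N$ (working over a sufficiently large field, or over $\mathbb{Q}$ with integers in $\{1,\dots,4N\}$), evaluate both numerical determinants, and compare. If the difference polynomial is not identically zero, Lemma~\ref{thm6} bounds the probability that the random evaluation vanishes by $N/|S| \leq 1/4$; hence the probability of a false negative (declaring the determinants equal when they actually differ, i.e.\ missing an odd PC cycle) is at most $1/4$, while no false positives occur.

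Finally I would account for the running time. Each step is dominated by evaluating a determinant of an $N \times N$ matrix with entries in $F$ once the random substitution is made, which costs $O(N^{\omega})$ field operations with $\omega < 2.3729$ the matrix-multiplication exponent; computing a single perfect matching (in the case where the determinants agree) is also within $O(N^{\omega})$. Since $N = O(cn)$, the total running time is $O((cn)^{\omega})$, as claimed. The only minor bookkeeping is to confirm that the arithmetic is done over a field large enough for Lemma~\ref{thm6} to give the stated error bound and that the intermediate integers stay polynomially bounded, which follows from choosing $|S|$ polynomial in $N$; I do not expect these routine details to present any real difficulty.
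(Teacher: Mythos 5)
Your proposal is correct and follows essentially the same route as the paper: reduce via Theorem~\ref{thm2} to detecting an $E_2$-odd perfect matching in $G^*$, apply Lemma~\ref{lemma2}, test $\det A^*_{G,E_2} - \det A^*_G \equiv 0$ by Schwartz--Zippel with random evaluations, and fall back on computing one perfect matching (Mucha--Sankowski) when the determinants agree, all within $O((cn)^\omega)$ time. If anything, your degree bookkeeping is slightly more careful than the paper's, which bounds the degree by $n$ and takes $|S|>4n$ where the determinant of the $O(cn)\times O(cn)$ matrix actually has degree up to $|V(G^*)|=O(cn)$, so your choice $|S|\geq 4|V(G^*)|$ is the safer one.
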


\begin{proof}
As $f(\textbf{x})=\det A^{*}_{G,E_2} -\det A^{*}_{G}$ is a multivariate polynomial of degree at most $n$, we may choose a set $S$ of real values, such that $|S|> 4n$, and use Lemma \ref{thm6} to decide if $\det {A^{*}}_{G,E_2}\neq \det A^{*}_{G}$ in polynomial time, with false negative less than 1/4. 
To see that the running time is $O((cn)^{\omega})$, recall (\ref{eq0}) and observe that computing the determinants of $A^{*}_{G,E_2}$ and $A^{*}_G$ will take time $O((cn)^\omega)$, where $\omega < 2.3729$, by the algorithm in~\cite{VVW}. 
Finally, for the case that $f(\textbf{x}) \equiv 0$, we can use the algorithm of Mucha and Sankowski~\cite{MS} to find a perfect matching in time~$O((cn)^\omega)$, and then decide its~$E_0$-parity.
 \end{proof}

\section{Open Problems}
We have proved that the odd PC cycle problem can be solved in randomized polynomial time. A natural question is whether this problem can be solved in (deterministic) polynomial time. 
It was proved in \cite{GJSWY} that if an edge-colored graph $G $ has no PC closed walk then $G$ has a monochromatic vertex. This can be viewed as a characterization of edge-colored graphs with no PC closed walk and it implies that deciding whether $G$ has a PC closed walk is polynomial-time solvable. We leave open the question of characterizing edge-colored graphs with no odd PC closed walk, as this differs from graphs with no odd PC cycle as we saw in Section \ref{sec3}.

 \vspace{3mm}
 
 \noindent{\bf Acknowledgment.}  Research of BS was partially supported by China Scholarship Council.

\end{document}